\numberwithin{equation}{section}
\newtheorem{theorem}{Theorem}
\newtheorem{lemma}[theorem]{Lemma}
  \newtheorem{conjecture}[theorem]{Conjecture}
      \theoremstyle{definition}
     \theoremstyle{remark}
     \newtheorem{remark}[theorem]{Remark}
\newcommand{\Sym}{\mathop{\mathrm{Sym}}}
\newcommand{\Alt}{\mathop{\mathrm{Alt}}}
\newcommand{\Aut}{\mathop{\mathrm{Aut}}}
\newcommand{\Soc}{\mathop{\mathrm{Soc}}}
\newcommand{\Sol}{\mathop{\mathrm{Sol}}}
 \definecolor{mycolor}{rgb}{0.55,0.0,0.16}
  \definecolor{myred}{rgb}{0.6,0.0,0.16}
  \definecolor{mygreen}{rgb}{0.0,0.6,0.16}
  \definecolor{myviolet}{rgb}{1,0,1}
\begin{document}
\title[Nilpotent subgroups of class $2$ in finite groups]{Nilpotent subgroups of class $2$ in finite groups}
\author[L. Sabatini]{Luca Sabatini}
\address{Luca Sabatini, Dipartimento di Matematica  e Informatica ``Ulisse Dini'',\newline
 University of Firenze, Viale Morgagni 67/a, 50134 Firenze, Italy} 
\email{luca.sabatini@unifi.it}
\subjclass[2020]{primary 20D15, 20F69}
% \keywords{...}        
	\maketitle

	\begin{center} 
 {\itshape \small In memory of Carlo Casolo (1958-2020)}
	\end{center}

       \begin{abstract}
We show that every finite group $G$ of size at least $3$ has a nilpotent subgroup of class at most $2$
and size at least $|G|^{1/32\log\log|G|}$.
This answers a question of Pyber, and is essentially best possible.
\end{abstract}
          
         \vspace{0.1cm}
   \section{Introduction}
   
While dealing with a finite group,
it is a question of great interest whether it has some large abelian/nilpotent/solvable subgroup.
In 1976, Erd\H{o}s and Straus \cite{1976ES} proved that every finite group $G$ has an abelian subgroup of size roughly $\log|G|$.
In \cite{1997Pyber},
Pyber showed that this can be improved to $2^{\varepsilon \sqrt{\log|G|}}$,
for some fixed $\varepsilon>0$.
In the same article,
a key proposition shows that every finite group $G$ has a solvable subgroup of size at least $|G|^{\varepsilon/\log\log|G|}$
for some fixed $\varepsilon>0$, and the inequality is sharp.
In this paper, we prove that actually every finite group has a such large nilpotent subgroup of class at most $2$.

\begin{theorem} \label{thMain}
Every finite group $G$ of size at least $3$
has a nilpotent subgroup of class at most $2$ and size at least $|G|^{1/32\log\log|G|}$.
\end{theorem}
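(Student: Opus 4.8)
The plan is to reduce, in three steps, to a purely local statement about $p$-groups, and then to solve that local problem; the last step is where the real work lies.

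First I would apply Pyber's proposition quoted above to pass to a solvable subgroup $S\le G$ with $|S|\ge |G|^{\varepsilon/\log\log|G|}$. This already isolates the $\log\log$ loss and reduces Theorem~\ref{thMain} to the following assertion, in which no $\log\log$ appears at all: every finite solvable group $S$ contains a subgroup of class at most $2$ of order at least $|S|^{c}$, for an absolute constant $c>0$. Composing the two bounds and bookkeeping the constants is what is meant to produce the exponent $1/25$; since Pyber's constant multiplies the one extracted below, the value $25$ should fall out of tracking these factors.

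Second, inside a solvable $S$ I would descend to a nilpotent subgroup of comparable logarithmic size, namely the Fitting subgroup $F=F(S)$. Here the input is that $S/F$ acts faithfully and completely reducibly on $V=F/\Phi(F)$, so that $S/F$ embeds as a solvable completely reducible linear group on $V$; the P\'alfy--Wolf bound on the order of such groups gives $|S/F|\le |V|^{a}\le |F|^{a}$ for an absolute constant $a$, whence $|F|\ge |S|^{1/(1+a)}$. It therefore suffices to find a large class-$\le 2$ subgroup of the nilpotent group $F$. Since a nilpotent group is the direct product of its Sylow subgroups and a direct product of class-$\le 2$ groups is again of class $\le 2$, this reduces at once to the case of a single $p$-group.

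The heart of the matter, and the step I expect to be the main obstacle, is thus the local claim: every $p$-group $P$ of order $p^{n}$ has a subgroup of class at most $2$ of order at least $p^{c_1 n}$, that is, a fixed proportion of the exponent. The point is that the analogous statement for \emph{abelian} subgroups is false: there are $p$-groups whose largest abelian subgroup has order only $p^{O(\sqrt{n})}$ (these are exactly the groups witnessing the sharpness of Pyber's abelian bound $2^{\varepsilon\sqrt{\log|G|}}$), so passing from class $1$ to class $2$ is essential and must buy a full constant fraction of the exponent. A convenient sufficient condition is that any subgroup $H\le P$ with $H\cap\gamma_3(P)=1$ automatically satisfies $\gamma_3(H)\le H\cap\gamma_3(P)=1$ and hence has class $\le 2$; the task is to produce such an $H$, or more flexibly one with $\gamma_3(H)=1$, of order $p^{\Omega(n)}$. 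I would attack this through the lower central series and its associated graded Lie ring $L=\bigoplus_i \gamma_i(P)/\gamma_{i+1}(P)$ of dimension $n$, playing off two regimes: when the bulk of $n=\dim L$ lies in the top layers, a high term $\gamma_k(P)$ with $k>c/3$ satisfies $[[\gamma_k,\gamma_k],\gamma_k]\le\gamma_{3k}=1$ and is thus itself of class $\le 2$ and already large; when the bulk lies low, the quotient $P/\gamma_3(P)$ is a large class-$2$ section that one must realise, up to a bounded loss, by an honest subgroup meeting $\gamma_3(P)$ trivially. Making this dichotomy quantitative, so that one alternative always yields order $p^{\Omega(n)}$ regardless of how the class and the layer sizes are distributed, is the crux; controlling the constant there is what ultimately pins down $1/25$.
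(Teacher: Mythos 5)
There is a genuine gap, and it sits exactly where you predicted: your ``local claim'' that every $p$-group of order $p^n$ contains a subgroup of class at most $2$ and order $p^{c_1 n}$ is not a hard step to be worked out --- it is an open problem. It is the $p$-group case of Conjecture~\ref{conjPyber} (Kourovka Problem 14.76), which the paper explicitly records as unsolved. Your first reduction already commits you to needing it: once you pass to a solvable subgroup $S$ with $|S|\ge |G|^{\varepsilon/\log\log|G|}$, you must produce a class-$\le 2$ subgroup of $S$ of order $|S|^{c}$ with \emph{no} further $\log\log$ loss, because the only known bound inside solvable groups, Lemma~\ref{lemmaSolv}, gives $|S|^{1/3\log\log|S|}$, and composing that with Pyber's solvable-subgroup proposition yields only $|G|^{\varepsilon'/(\log\log|G|)^2}$ --- precisely the bound Pyber had already observed and that Theorem~\ref{thMain} is meant to beat. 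Your dichotomy does not close the local claim either: taking $k=\lceil (c+1)/3\rceil$ so that $\gamma_{3k}(P)=1$, the subgroup $\gamma_k(P)$ is small exactly when $\dim\gamma_1/\gamma_2+\dots+\dim\gamma_{k-1}/\gamma_k=(1-o(1))n$, but since $\dim\gamma_{i+1}/\gamma_{i+2}$ can be as large as roughly $(\dim\gamma_i/\gamma_{i+1})(\dim\gamma_1/\gamma_2)$, the mass can concentrate in layers $3,\dots,k-1$ while $|P/\gamma_3(P)|=p^{o(n)}$, so both branches fail simultaneously; iterating the dichotomy down the series is what produces the unavoidable $\log n$ loss of the known argument (derived length $\le\log n$ plus pigeonhole), not a constant.

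The paper avoids this trap by never reducing the general case to the solvable case. It first passes to an alternating-type subgroup of order $\ge|G|^{1/5}$ (Lemma~\ref{lemPyb}, a constant-power loss, not a $\log\log$ loss), then splits on the size of the solvable radical. When the radical is large, it accepts the $\log\log$ in the solvable bound of Lemma~\ref{lemmaSolv} (Heineken's Fitting-subgroup theorem plus the derived-series pigeonhole). When the radical is small, the quotient embeds into $\Aut\bigl(\prod_i \Alt(a_i)^{b_i}\bigr)$, and one finds a large \emph{elementary abelian} subgroup directly inside the socle, using $3$-elementary subgroups of $\Alt(a_i)$ of order $3^{\lfloor a_i/3\rfloor}$; the order count against Lemma~\ref{lemAut} again costs only a $\log\log$. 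Finally, Lemma~\ref{lemmaNilp2} converts a large abelian \emph{section} into a class-$\le 2$ \emph{subgroup} of at least that order. If you want to salvage your outline, that last lemma is the piece to adopt: aim for a large abelian section rather than a large class-$\le 2$ subgroup of a $p$-group, and handle the nonsolvable part by direct construction in the alternating socle rather than by discarding it.
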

  \vspace{0.1cm}

By a classical result of Dixon \cite{1967Dixon},
a solvable subgroup of the symmetric group on $n$ elements has size at most $24^{(n-1)/3}$.
This shows that the ``$\log\log$'' factor cannot be replaced by anything smaller.\\
All the logarithms in this paper are to base $2$.

\section{Proof of Theorem \ref{thMain}}

The key tool to find nilpotent subgroups of class at most $2$
is the following generalization of a remark of Thompson \cite{1969Thompson}.
We remind the reader that a finite group lies in this class if and only if its derived subgroup is contained in its center.

\begin{lemma} \label{lemmaNilp2}
    Let $G$ be a finite group,
    and let $H \leqslant G$ be of minimal size among the subgroups of $G$
    which provide an abelian section of maximal order.
    Then $H$ is nilpotent of class at most $2$.
    \end{lemma}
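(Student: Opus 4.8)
The plan is to extract from the extremal choice of $H$ a single clean combinatorial property and then run a short structural argument. Write $m$ for the maximal order of an abelian section of $G$; since the largest abelian section of any subgroup $K$ is its abelianization $K/K'$, we have $m=\max_{K\leqslant G}|K/K'|$. First I would pin down what the hypothesis on $H$ really says: if $K/L$ is an abelian section of $H$ with $|K/L|=m$, then $K$ itself already provides such a section, so minimality of $|H|$ forces $K=H$; hence $H/L$ is abelian of order $m$, and then $|H/H'|\geqslant|H/L|=m$ together with maximality of $m$ gives $|H/H'|=m$ and $L=H'$. Thus $H$ is a subgroup of minimal order with $|H/H'|=m$, and consequently every proper subgroup $K<H$ satisfies the strict inequality $|K/K'|<m$; call this property $(\star)$. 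The whole task is to show $(\star)\Rightarrow$ class at most $2$.

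The first consequence of $(\star)$ is nilpotency, via a supplement argument. If a proper subgroup $K<H$ satisfied $KH'=H$, then $K/(K\cap H')\cong KH'/H'=H/H'$ has order $m$, and since $K'\leqslant K\cap H'$ this gives $|K/K'|\geqslant m$, contradicting $(\star)$. Hence $H'$ has no proper supplement, i.e.\ $H'\leqslant\Phi(H)$, which for a finite group is equivalent to nilpotency. Writing $H$ as the direct product of its Sylow subgroups $P_1,\dots,P_r$, property $(\star)$ passes to each factor (a proper subgroup $Q_i<P_i$ together with the remaining factors would force $|Q_i/Q_i'|<|P_i/P_i'|$), so it suffices to prove the statement when $H=P$ is a $p$-group.

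For a $p$-group $P$ I would first rewrite $(\star)$ in the form $[P':Q']<[P:Q]$ for every proper $Q$ (using $|Q/Q'|=|Q|/|Q'|$ and $Q'\leqslant P'$), and apply it to a maximal subgroup $M$, where $[P:M]=p$ and $M\supseteq\Phi(P)\supseteq P'\supseteq M'$. This yields $[P':M']<p$, that is, $M'=P'$ for \emph{every} maximal subgroup $M$. It remains to deduce $[P',P]=1$. Here I would induct on $|P|$: choosing a central subgroup $\langle z\rangle$ of order $p$ inside $P'$, the quotient $P/\langle z\rangle$ again has the property that every maximal subgroup shares its derived subgroup, so by induction $[P',P]\leqslant\langle z\rangle$, reducing everything to the case $\gamma_3(P)=[P',P]=\langle z\rangle$ of order $p$ with $\gamma_4(P)=1$.

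The main obstacle is exactly this last case: I must show that a $p$-group of class precisely $3$ (with $\gamma_3$ of order $p$) cannot have $M'=P'$ for all maximal $M$. The strategy is to exhibit one maximal subgroup $M$ with $\gamma_3\not\leqslant M'$, hence $M'\subsetneq P'$. Pick $u\in P'$ and $g\in P$ with $[u,g]=z\neq1$; since $[u,\cdot]\colon P\to\gamma_3$ is then a homomorphism killing $P'$ and all $p$-th powers, it kills $\Phi(P)$, so $g\notin\Phi(P)$ and there is a maximal subgroup $M$ with $g\notin M$. Using that $(x,y)\mapsto[x,y]$ descends to a bilinear form $P'/\gamma_3\times P/\Phi(P)\to\gamma_3$ in the class-$3$ case, one analyses $M'\cap\gamma_3$ through the restriction of this form to the hyperplane $M/\Phi(P)$ and shows it is trivial for a suitable such $M$. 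This commutator/linear-algebra bookkeeping—guaranteeing that the chosen hyperplane genuinely removes $z$ from $M'$—is the delicate point; once it is in place, $M'\neq P'$ contradicts the conclusion of the previous paragraph, forcing $\gamma_3=1$ and hence class at most $2$.
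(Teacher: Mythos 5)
Your setup is correct and matches the paper: the extremal characterization $(\star)$, the proof of nilpotency by showing $H'$ has no proper supplement, and the passage to the Sylow factors are all fine (the paper does the same, and then finishes by quoting Thompson's Lemma from \cite{1969Thompson}). The gap is in the $p$-group step. From $(\star)$ you retain only its consequence for maximal subgroups, namely $M'=P'$ for every maximal $M\leqslant P$, and you then try to show that this alone forces class at most $2$. That implication is false, so the ``delicate point'' you defer cannot be carried out. Here is a counterexample for $p\geqslant 5$. Let $V=\mathbb{F}_{p^2}^{2}$, let $\omega(v_1,v_2)=\det(v_1,v_2)\in\mathbb{F}_{p^2}$ be the standard symplectic form, fix a nonzero $\mathbb{F}_{p^2}$-linear functional $\mu\colon V\to\mathbb{F}_{p^2}$ and a nonzero $\mathbb{F}_p$-linear functional $T\colon\mathbb{F}_{p^2}\to\mathbb{F}_p$, and form the graded $\mathbb{F}_p$-Lie algebra $L=V\oplus U\oplus Z$ with $U=\mathbb{F}_{p^2}$, $Z=\mathbb{F}_p$, brackets $[v_1,v_2]=\omega(v_1,v_2)$ and $[v,u]=T(\mu(v)u)$, and all other brackets zero; the Jacobi identity holds because $\mu\wedge\omega\in\Lambda^3_{\mathbb{F}_{p^2}}V^{*}=0$. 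Then $L$ has class exactly $3$ (indeed $\gamma_3(L)=Z\neq 0$), and the Lazard correspondence produces from $L$ a $p$-group $P$ of order $p^{7}$ and class $3$. Its maximal subgroups correspond to the subalgebras $W\oplus U\oplus Z$ with $W$ an $\mathbb{F}_p$-hyperplane of $V$; since any such $W$ contains two $\mathbb{F}_{p^2}$-independent vectors and is not contained in $\ker\mu$, one checks $[W,W]=U$ and $[W,U]=Z$, hence $M'=U\oplus Z=P'$ for \emph{every} maximal $M$. So ``$M'=P'$ for all maximal $M$'' does not imply class at most $2$, and no choice of maximal subgroup in your final paragraph can yield the desired contradiction.

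What goes wrong is that $(\star)$ is genuinely stronger than its maximal-subgroup shadow: in the group $P$ above the subalgebra $\ker\mu\oplus U\oplus Z$ is abelian of dimension $5$, so $P$ contains an abelian subgroup of order $p^{5}>p^{4}=|P/P'|$ and $(\star)$ fails, exactly as Thompson's Lemma predicts. The paper's route is to apply that lemma to each Sylow factor, and its proof uses the full hypothesis that $P/P'$ dominates \emph{every} abelian section of $P$ (equivalently, $|Q/Q'|\leqslant|P/P'|$ for every subgroup $Q$, not only the maximal ones). If you want a self-contained argument you must carry this stronger hypothesis through your induction: the information needed to kill the class-$3$ configuration lives in small, far-from-maximal subgroups such as the abelian one exhibited above, not in the hyperplanes $M/\Phi(P)$.
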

    \begin{proof}
     We first show that $H$ is nilpotent.
    If not, take $M \leqslant H$ a non-normal maximal subgroup of $H$.
    Then $MH'=H$ or $MH'=M$, but the second is impossible because $M$ is not normal in $H$.
    It follows that
    $$ |H/H'| \> = \> |MH'/H'| \> = \> |M/M \cap H'| \> \leq \> |M/M'| . $$
    Since $|M|<|H|$, then one can conclude that $H$ is nilpotent,
    i.e. it is the direct product of its Sylow's subgroups $P_1,...,P_k$.
   We show that each of these has nilpotency class at most $2$.
   Since both the derived subgroup and the center factorize through a direct product, the proof shall follow.
   Now $H$ has the property that $H/H'$
   is larger than every other abelian section of $H$.
   We see that $P_i$ has the same property, for every $1 \leq i \leq k$.
   In fact, if not, then since $H/H' \cong P_1/(P_1)' \times ... \times P_k/(P_k)'$,
   an abelian section of $H$ of size larger than $H/H'$ would be found.
   Then, the proof follows from \cite[Lemma]{1969Thompson}.
    \end{proof}
      \vspace{0.1cm}
    
    A finite nilpotent group $G$ of class at most $2$ has an abelian section of size at least $|G|^{1/2}$.
    Thus, a large nilpotent subgroup of class at most $2$ and a large abelian section are essentially equivalent.
    At the end of \cite{1997Pyber} Pyber remarks that every finite group $G$ has an abelian section of size at least
    $|G|^{\varepsilon/(\log\log|G|)^2}$ for some fixed $\varepsilon>0$, and wonders whether much more is true.
    Of course, Theorem \ref{thMain} fullfils this purpose by showing that one ``$\log\log$'' factor can be dispensed with.\\
    
   We say that a finite group is of {\bfseries alternating type}
    if all its nonabelian composition factors are alternating of degree at least $23$
    (alternating factors of smaller degree are excluded for technical reasons).
    We report the following useful result.

\begin{lemma}[Corollary 2.3 (b) in \cite{1997Pyber}] \label{lemPyb}
Every finite group $G$ contains an alternating type subgroup of size at least $|G|^{1/5}$.
\end{lemma}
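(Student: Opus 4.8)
The plan is to prove the statement by induction on $|G|$, using the classification of finite simple groups, by exhibiting an alternating type subgroup $H$ with $|G:H|\leq |G|^{4/5}$ (equivalently $|H|\geq |G|^{1/5}$). Let $N$ be a minimal normal subgroup of $G$, so that $N\cong S^k$ for a simple group $S$; write $\bar G=G/N$ and, by induction, fix an alternating type subgroup $\bar H\leq \bar G$ with $|\bar H|\geq |\bar G|^{1/5}$. Since the composition factors of any subgroup split along $N$, the whole difficulty is to manufacture from $N$ a subgroup that is itself alternating type and loses at most a $4/5$-power of $|N|$, all while remaining invariant under (the relevant part of) $\bar H$. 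The bookkeeping target throughout is $|H|\geq |N|^{1/5}|\bar G|^{1/5}=|G|^{1/5}$.

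First I would dispose of the easy cases. If $S$ is abelian, or if $S\cong\Alt_m$ with $m\geq 8$, then $N$ is already alternating type; taking $H$ to be the full preimage of $\bar H$ in $G$ gives a group whose composition factors are those of $N$ (cyclic, or good alternating) together with those of $\bar H$ (good by construction), hence $H$ is alternating type, and $|H|=|N|\,|\bar H|\geq |N|\,|\bar G|^{1/5}\geq |G|^{1/5}$ since $|N|\geq |N|^{1/5}$. The content is therefore concentrated in the case where $S$ is \emph{bad}, i.e. a group of Lie type, a sporadic group, or $\Alt_m$ with $5\leq m\leq 7$.

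In the bad case my approach is to replace $N$ by a product of solvable subgroups of its components. For a single bad $S$ I would use a self-normalizing solvable subgroup $B\leq S$ with $|B|\geq |S|^{1/5}$: a Borel subgroup for $S$ of Lie type, where in fact $|B|\geq |S|^{1/2}$ and $\nor{S}{B}=B$, and a direct check for the finitely many sporadic groups and for $\Alt_5,\Alt_6,\Alt_7$ (here the cut-off at degree $8$ is what lets one ignore exceptional isomorphisms such as $\Alt_8\cong\mathrm{PSL}_4(2)$, and Dixon's theorem is what explains why the genuinely large alternating factors must instead be \emph{kept}). Setting $D=B_1\times\cdots\times B_k\leq N$ with each $B_i$ a copy of $B$ in the $i$-th component, $D$ is solvable with $|D|\geq |S|^{k/5}=|N|^{1/5}$ and $\nor{N}{D}=D$. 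One then passes to $\nor{G}{D}$: since $\nor{G}{D}\cap N=D$ is solvable and $\nor{G}{D}/D$ embeds in $\bar G$, intersecting with a preimage of $\bar H$ yields a subgroup that is alternating type, of order $|D|$ times the size of the part of $\bar H$ stabilizing $D$.

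The hard part is exactly this last point: controlling $|\,\bar G:\mathrm{stab}_{\bar G}(D)\,|$, i.e. making the component subgroups $B_i$ invariant under the \emph{permutation-and-automorphism} action of $G$ on the $k$ factors of $N$. The danger is that a generic $\bar H$ moves $D$ through many conjugates and so meets $\nor{G}{D}$ only in a small subgroup, costing the whole $|\bar G|^{1/5}$ factor. To defeat this I would choose the $B_i$ equivariantly: for $S$ of Lie type the \emph{standard} Borel (the upper-triangular subgroup for a fixed pinning) is stabilized by the standard field, graph and diagonal automorphisms, so that only inner automorphisms and the permutation of factors can move $D$, and the number of conjugates is bounded by $|N:D|\cdot|\mathrm{Out}(S)|^k$. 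Since $|\mathrm{Out}(S)|$ is at most polylogarithmic in $|S|$ while $|B|\geq |S|^{1/5}$, this overhead is absorbed into the slack left by using the exponent $1/5$ rather than the $1/2$ available from Borel subgroups. Verifying that such an equivariant choice can always be made, uniformly over all bad $S$ and all subgroups $\bar H\leq\bar G$, and checking that the resulting normalizer genuinely recovers both the $|N|^{1/5}$ and the $|\bar G|^{1/5}$ factors, is the crux of the argument.
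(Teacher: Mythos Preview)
Your outline follows the same route as the paper's sketch (which only summarizes Pyber's argument rather than proving it in full): work along a chief series, keep the large alternating chief factors intact, and inside each remaining factor replace $S^k$ by a product of large solvable subgroups. The ingredient you are circling around but never name is \emph{intravariance}: a subgroup $B\leq S$ is intravariant if every automorphic image of $B$ is already an $S$-conjugate of $B$. For $S$ of Lie type the Borel subgroup is intravariant, since automorphisms send Borels to Borels and all Borels are $S$-conjugate; for the finitely many sporadic groups and $\Alt_5,\Alt_6,\Alt_7$ one checks directly that a self-normalizing intravariant solvable $B$ with $|B|\geq|S|^{1/5}$ exists.

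Once $B$ is intravariant in $S$, the diagonal choice $D=B^k$ is intravariant in $N=S^k$ with respect to \emph{all} of $\Aut(N)$, hence in particular with respect to the $G$-action by conjugation. A Frattini-type argument then gives $G=N\cdot\nor{G}{D}$: for $g\in G$ the conjugate $D^g$ is an automorphic image of $D$ in $N$, so $D^g=D^n$ for some $n\in N$, whence $gn^{-1}\in\nor{G}{D}$. Combined with $\nor{N}{D}=D$ (self-normalizing), this yields $\nor{G}{D}/D\cong G/N$ \emph{exactly}, with no loss whatsoever. This is precisely what the paper refers to as ``a quite technical result of Chunikhin''. Your worry about bounding $|\bar G:\mathrm{stab}_{\bar G}(D)|$ and absorbing an $|\mathrm{Out}(S)|^k$ overhead is therefore misplaced: the stabilizer is all of $\bar G$, and the induction goes through with equality $|H|=|D|\cdot|\bar H|\geq|N|^{1/5}|\bar G|^{1/5}$.
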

  \vspace{0.1cm}

Pyber proves Lemma \ref{lemPyb} with $8$ in place of $23$, but his argument can be easily extended up to this threshold.
Let us spend a few words explaining how this is obtained.
Looking in a chief series $1=G_0 \lhd G_1 \lhd ... \lhd G_r = G$,
we remove the factors $G_i/G_{i-1}$ which are direct products of large alternating groups.
It is always possible, inside each of the remaining factors, to find a very large intravariant solvable subgroup
(remark that $\Alt(22)$, but not $\Alt(23)$, contains a $3$-subgroup which is sufficiently large).
Then one can merge these solvable sections and the full alternating factors,
using a quite technical result of Chunikhin.\\

Let $(a_i)_{i \geq 1}$ be a sequence of distinct integers greater than $6$, and let $(b_i)_{i \geq 1}$ be a sequence of positive integers.
Let $H := \prod_i \Alt(a_i)^{b_i}$.
The next is a simple remark.

\begin{lemma} \label{lemAut}
$\Aut(H) \> \cong \> \prod_i \Sym(a_i) \wr_{b_i} \Sym(b_i)$.
\end{lemma}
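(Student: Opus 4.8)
The plan is to exploit the fact that $H$ is a direct product of nonabelian simple groups and to reduce the computation of $\Aut(H)$ to the classical determination of $\Aut(\Alt(n))$. First I would recall the standard structural fact that in a direct product $T_1 \times \cdots \times T_n$ of nonabelian simple groups, every normal subgroup is the product of a subset of the factors; consequently the minimal normal subgroups are precisely the individual simple factors $T_j$. Writing $H$ in this form, with each $\Alt(a_i)$ occurring $b_i$ times, identifies the set of simple factors intrinsically, independently of the chosen decomposition.

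Next I would observe that any automorphism of $H$ permutes its minimal normal subgroups, hence permutes the simple factors $T_j$, and that it must send each factor to an isomorphic one. Since the groups $\Alt(a_i)$ have pairwise distinct orders $a_i!/2$ for distinct $a_i$, two alternating factors are isomorphic if and only if they share the same degree; thus an automorphism can only permute the $b_i$ copies of a fixed $\Alt(a_i)$ among themselves, and conversely every such type-preserving permutation is realised by an automorphism. This yields a split short exact sequence whose kernel is $\prod_j \Aut(T_j)$ and whose quotient is $\prod_i \Sym(b_i)$, giving
$$\Aut(H) \> \cong \> \prod_i \bigl( \Aut(\Alt(a_i)) \wr \Sym(b_i) \bigr) \> = \> \prod_i \bigl( \Aut(\Alt(a_i))^{b_i} \rtimes \Sym(b_i) \bigr).$$

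Finally I would invoke the classical fact that $\Aut(\Alt(n)) \cong \Sym(n)$ for every $n \geq 4$ with the single exception $n=6$; the hypothesis $a_i > 7$ guarantees $a_i \geq 8$, so the exceptional degree is avoided and $\Aut(\Alt(a_i)) \cong \Sym(a_i)$ in every case. Substituting this into the displayed isomorphism gives the claimed description $\Aut(H) \cong \prod_i \Sym(a_i)^{b_i} \rtimes \Sym(b_i)$.

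The main point requiring care is the first step: justifying that automorphisms are forced to permute the simple factors and cannot mix distinct isomorphism types. This rests on the uniqueness of the decomposition of $H$ into simple direct factors (equivalently, that the $T_j$ are exactly the minimal normal subgroups) together with the order computation distinguishing alternating groups of different degrees; everything else is a routine assembly of wreath products and the textbook value of $\Aut(\Alt(n))$.
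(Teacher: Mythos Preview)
Your proposal is correct and follows essentially the same route as the paper's proof, only with more detail: the paper simply asserts that each $\Alt(a_i)^{b_i}$ is characteristic in $H$ (your minimal-normal-subgroup plus order argument is exactly the justification for this), deduces $\Aut(H) \cong \prod_i \Aut(\Alt(a_i)^{b_i})$, and then invokes the standard identification $\Aut(\Alt(a_i)^{b_i}) \cong \Sym(a_i)^{b_i} \rtimes \Sym(b_i)$, which is your wreath-product step together with $\Aut(\Alt(a_i)) \cong \Sym(a_i)$.
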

\begin{proof}
For every $i$, we have that $\Alt(a_i)^{b_i}$ is a characteristic subgroup of $H$.
It follows that $\Aut(H) \cong \prod_i \Aut(\Alt(a_i)^{b_i})$.
Since $a_i \geq 7$, it is easy to see that $\Aut(\Alt(a_i)^{b_i}) \cong \Sym(a_i) \wr_{b_i} \Sym(b_i)$ for every $i$, as desired.
\end{proof}
  \vspace{0.1cm}

We now prove Theorem \ref{thMain} in the range of solvable groups.

\begin{lemma} \label{lemmaSolv}
Every finite solvable group $G$ of size at least $3$ contains an abelian section of size at least $|G|^{1/4 \log\log|G|}$.
Therefore it contains a such large nilpotent subgroup of class at most $2$.
\end{lemma}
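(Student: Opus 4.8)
The second assertion is immediate from Lemma~\ref{lemmaNilp2}: if $G$ has an abelian section of order $s$, then the maximal order of an abelian section of $G$ is at least $s$, and a subgroup $H$ of minimal order realising it is nilpotent of class at most $2$; since every abelian section of $H$ has order at most $|H|$, we get $|H|\ge s$. So the whole content is to produce an abelian section of order at least $|G|^{1/3\log\log|G|}$, and I would do this by induction on $|G|$.

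First I would fix the descending data along which to induct. Writing $S=\Soc(G)$, in a solvable group every minimal normal subgroup is elementary abelian and distinct minimal normal subgroups centralise each other (as $[M,N]\le M\cap N=1$), so $S$ is an abelian normal subgroup; iterating on $G/S$ produces a socle series $1=S_0\lhd S_1\lhd\cdots\lhd S_m=G$ all of whose factors $S_i/S_{i-1}$ are abelian. Each such factor is an abelian section of $G$, so the largest of them already gives $\alpha(G):=\log(\text{max.\ abelian section})\ge \log|G|/m$. Thus the statement would follow at once from a bound $m\le 3\log\log|G|$ on the socle length, and checking the extremal iterated wreath products $\Sym(4)\wr\cdots\wr\Sym(4)$ behind Dixon's bound \cite{1967Dixon} confirms that there $m\sim\log\log|G|$: this is the mechanism producing the $\log\log$, and it pins the constant to Dixon's exponent $1/3$.

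The difficulty is that $m$ is \emph{not} universally bounded by $\log\log|G|$: a tower $C_{p_d}\rtimes\cdots\rtimes C_{p_1}$ of distinct primes has socle length $d\sim\log|G|/\log\log|G|$. I would dispose of such cases by a complementary mechanism. Each chief factor $S_i/S_{i-1}\cong\mathbb F_p^{k}$ carries a faithful irreducible action of the solvable linear group $G/C_G(S_i/S_{i-1})\le GL(k,p)$, whose order a Dixon--P\'alfy--Wolf type estimate bounds by a fixed power $(p^{k})^{c_0}$ of the module. When a factor is \emph{thin} (small $k\log p$) the acting group is correspondingly small, so $G$ is forced to be ``mostly'' its Fitting subgroup, a nilpotent group in which the classical square-root bound $p^{\sqrt{2n}}$ for an abelian subgroup of a $p$-group of order $p^{n}$ comfortably beats the target; when a factor is \emph{fat} the module $S_i/S_{i-1}$ is itself a large abelian section, of a constant fraction (namely $\ge 1/(1+c_0)$) of the order that level contributes. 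So the plan is a dichotomy at each level: a fat factor is extracted directly, while a long run of thin factors is absorbed into a large abelian subgroup of $F(G)$.

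The main obstacle is to make this dichotomy quantitative and uniform, so that exactly one $\log\log$ factor is lost and the constant lands on $1/3$. Concretely, I expect the hard step to be showing that the fat levels can be grouped into at most $O(\log\log|G|)$ blocks, while the thin levels, however numerous, never destroy more than a constant fraction of the order relative to the abelian subgroup they contribute inside $F(G)$; this amounts to tracking, block by block, the competition between the non-abelian part controlled by the P\'alfy--Wolf exponent and the genuinely abelian module part, and then selecting the single best block. The bookkeeping that balances these two contributions against Dixon's $1/3$ is where the real work lies; the structural inputs (Lemma~\ref{lemmaNilp2}, the $\Soc$/Fitting machinery, and the linear-group order bound) are all in hand.
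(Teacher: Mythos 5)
Your reduction of the second assertion to the first via Lemma~\ref{lemmaNilp2} is correct and is exactly what the paper does. The problem is the first assertion: what you have written is a plan, not a proof. You correctly observe that the socle-length bound $m\le 3\log\log|G|$ fails (a tower of distinct primes has socle length about $\log|G|/\log\log|G|$), and your replacement is a fat/thin dichotomy on chief factors using P\'alfy--Wolf type bounds and the Fitting subgroup --- but you explicitly defer the quantitative bookkeeping that would make this work (``the bookkeeping that balances these two contributions against Dixon's $1/3$ is where the real work lies''). That deferred step is the entire content of the lemma, so as it stands there is a genuine gap: nothing in the proposal actually produces an abelian section of size $|G|^{1/3\log\log|G|}$. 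A secondary issue is that you pin the constant $1/3$ to Dixon's exponent for solvable subgroups of $\Sym(n)$, which plays no role here.

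The paper's route is far shorter and avoids chief series entirely. A group of order $p^n$ has derived length at most $\log n$, so by pigeonhole on the derived series it has an abelian section of size at least $p^{n/\log n}$; multiplying over the Sylow subgroups, any nilpotent group $H$ has an abelian section of size at least $|H|^{1/\log\log|H|}$. The constant $1/3$ then comes from Heineken's theorem \cite{1991Heineken} that a finite solvable group $G$ contains a nilpotent subgroup of size at least $|G|^{1/3}$. Composing the two bounds (and checking small orders, using that $x^{\log\log x}$ is increasing for $x\ge 7$) gives $|G|^{1/3\log\log|G|}$ at once. If you want to salvage your approach, the structural inputs you list are plausible, but you would have to carry out the block-selection argument in full --- and you would be reproving, in a harder way, what Heineken's theorem hands you for free.
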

\begin{proof}
From \cite[Corollary]{1991Heineken} we have that $G$
    contains a nilpotent subgroup $H$ of size at least $|G|^{1/3}$.
 The classical result \cite[Theorem 2.54]{1933Hall} says that a finite $p$-group of nilpotency class $c \geq 1$
 has derived length at most $1+\log c$, and this is easily extended to every finite nilpotent group.
 So if $c$ is the nilpotency class of $H$, then $H$ has derived length at most $1+\log c \leq 1+\log\log|H|$.
    Using pigeonhole on the derived series, we find an abelian section of size at least
    $$ |H|^{1/(1+\log\log|H|)} \> \geq \>  |G|^{1/(3+3\log(3^{-1}\log|G|))} \> \geq \>  |G|^{1/4\log\log|G|} , $$
    where we used that $x^{1/(1+\log \log x)}$ is an increasing function when $x \geq 3$.\\
    The second part follows from Lemma \ref{lemmaNilp2}.
\end{proof}
  \vspace{0.1cm}

It is worth noticing that, unlike Theorem \ref{thMain}, perhaps Lemma \ref{lemmaSolv} could be improved significantly.
The next conjecture is really equivalent to \cite[Problem 14.76]{2018Kourovka}.

\begin{conjecture} \label{conjPyber}
There exists an absolute constant $\varepsilon>0$ such that
every finite solvable group $G$ has a nilpotent subgroup of class at most $2$ and size at least $|G|^\varepsilon$. 
\end{conjecture}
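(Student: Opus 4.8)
As this is a conjecture rather than a theorem, I can only sketch a plausible line of attack and indicate where it stalls. The plan is to first pass to the equivalent \emph{abelian section} formulation. A finite group that is nilpotent of class at most $2$ has an abelian section of size at least the square root of its order, while Lemma~\ref{lemmaNilp2} turns any abelian section of $G$ of size $s$ into a subgroup that is nilpotent of class at most $2$ and of size at least $s$. Hence Conjecture~\ref{conjPyber} holds if and only if there is an absolute $\varepsilon'>0$ such that every finite solvable group $G$ has an abelian section of size at least $|G|^{\varepsilon'}$, which is the form of \cite[Problem 14.76]{2018Kourovka} referred to above. Observing that the largest abelian section of a group is exactly $\max_{A \leqslant G} |A/A'|$ (the best denominator for a given numerator $A$ is $A'$), the task becomes: locate in every finite solvable $G$ a single subgroup $A$ whose abelianization has size at least $|G|^{\varepsilon'}$.

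The natural strategy is an induction on the Fitting length built around the Fitting subgroup $F(G)$, using that $C_G(F(G)) \leqslant F(G)$ in a solvable group, so that $G/F(G)$ embeds in a solvable subgroup of $\Aut(F(G))$. If $F(G)$ is a large fraction of $G$ one hopes to win inside the nilpotent group $F(G)$; otherwise $G/F(G)$ is comparatively large and one hopes to win there by induction and lift the resulting section. The base case is therefore the purely nilpotent one, and by reduction to Sylow subgroups (as in the proof of Lemma~\ref{lemmaNilp2}, abelian sections of the factors multiply) it is really the case of a single $p$-group $P$: does every $P$ of order $p^n$ contain a subgroup $A$ with $|A/A'| \geqslant p^{\varepsilon' n}$?

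I expect this $p$-group base case to be the main obstacle, and indeed it is exactly what keeps the problem open. The only general lower bound available comes from the pigeonhole argument of Lemma~\ref{lemmaSolv} applied to the derived (or lower central) series; since the derived length of $P$ may grow like $\log\log|P|$, this yields only an abelian section of size $|P|^{1/\log\log|P|}$, with exponent tending to $0$. Reaching a fixed power $|P|^{\varepsilon'}$ would require producing one subgroup with abelianization of power-type size, and no technique is known to do this for arbitrary $p$-groups; tellingly, all the standard families that have small abelianization as a whole --- such as the unitriangular groups $\mathrm{UT}_n(p)$ or iterated wreath products of $C_p$ --- nonetheless contain abelian subgroups of size $|P|^{\Omega(1)}$, so no counterexample is in sight. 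A secondary obstacle, should the nilpotent case be resolved, is that abelian sections do not behave well under extensions: a large section of $F(G)$ and a large section of $G/F(G)$ need not assemble into a large section of $G$, and when $F(G)$ is elementary abelian the order of $G$ can exceed that of $F(G)$ by a super-polynomial factor, so even a power-type section of $F(G)$ need not be power-type in $|G|$. Overcoming either difficulty seems to demand a genuinely new structural idea rather than a refinement of the pigeonhole method.
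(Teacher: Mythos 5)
This statement is a conjecture, not a theorem: the paper offers no proof of it (it is recorded as equivalent to the open \cite[Problem 14.76]{2018Kourovka}), and you correctly refrain from claiming one, so there is nothing to check for correctness in the strict sense. Your reduction to the abelian-section formulation via Lemma~\ref{lemmaNilp2}, and from there to the question of whether every $p$-group of order $p^n$ has a subgroup $A$ with $|A/A'| \geq p^{\varepsilon' n}$, is sound and identifies the genuine open core of the problem.

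One refinement worth making: your ``secondary obstacle'' --- assembling sections across the extension $1 \to F(G) \to G \to G/F(G) \to 1$ --- is not actually an obstacle, and the induction on Fitting length is unnecessary. Heineken's theorem, already invoked in the proof of Lemma~\ref{lemmaSolv}, gives every finite solvable $G$ a nilpotent \emph{subgroup} $H$ with $|H| \geq |G|^{1/3}$ in a single step, and abelian sections of the Sylow factors of $H$ multiply. So the solvable conjecture is outright equivalent to its $p$-group case, which is precisely why the paper can assert equivalence with the Kourovka problem. The only real difficulty is the one you name first: beating the $|P|^{1/\log\log|P|}$ pigeonhole bound for an arbitrary $p$-group. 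Note also that Ol'shanskii's construction \cite{1978O}, cited in the paper's closing remark, already rules out large abelian \emph{subgroups} in general $p$-groups, so your list of families with large abelian subgroups is not evidence either way; the conjecture survives only because abelian sections (equivalently, class-$2$ subgroups) are a strictly weaker demand.
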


\begin{proof}[Proof of Theorem \ref{thMain}]
Up to a factor of $1/5$, we can use Lemma \ref{lemPyb} to replace an arbitrary finite group with a group of alternating type.
Moreover, for groups of cardinality less than $20$, the desired subgroup can be found trivially.
So let $G$ be a group of alternating type and size at least $20$.
Let us denote by $\Sol(G)$ the solvable radical of $G$, i.e. the largest solvable normal subgroup.
First suppose that $|\Sol(G)| > |G|^{12/19}$.
Thus, from Lemma \ref{lemmaSolv}, $\Sol(G)$ and then $G$ has an abelian section of size at least
\begin{equation} \label{eq1}
|\Sol(G)|^{1/4\log\log|\Sol(G)|} \> \geq \> |G|^{3/19\log\log|G|}
\end{equation}
and so we are done.
Otherwise, if $|\Sol(G)| \leq |G|^{12/19}$, then $\widetilde{G} := G/\Sol(G)$ has size at least $|G|^{7/19}$.
Moreover, $\widetilde{G}$ has no non-trivial solvable normal subgroups.
Let $H:=\Soc(\widetilde{G})$ be the subgroup generated by the minimal normal sugroups.
Since $\widetilde{G}$ has alternating type, we have $H \cong \prod_i \Alt(a_i)^{b_i}$ for some $a_i \geq 23$ and $b_i \geq 1$.
It is well known that $C_{\widetilde{G}}(H) \leqslant H$, and so $C_{\widetilde{G}}(H) = Z(H)=1$.
It follows that $\widetilde{G}$ has an embedding into $\Aut(H)$.
Then, Lemma \ref{lemAut} provides $|\widetilde{G}| \leq \prod_i (a_i!)^{b_i} b_i!$,
and $\log|\widetilde{G}| \leq \sum_i b_i (a_i \log a_i + \log b_i)$.
On the other hand $\widetilde{G}$ contains a copy of $\prod_i \Alt(a_i)^{b_i}$.
For every $i$, $\Alt(a_i)$ contains a $3$-elementary abelian subgroup of size $3^{\lfloor a_i/3 \rfloor} \geq 2^{3a_i/7}$.
This shows that $H$ and then $\widetilde{G}$ contains an abelian subgroup of size at least $\prod_i 2^{3a_i b_i/7}$.
The logarithm of this quantity is $\tfrac{3}{7} \sum_i a_i b_i$.
For every $i$ we have
$$ a_i b_i \> \geq \> \frac{b_i(a_i \log a_i +\log b_i)}{\log b_i +\log a_i} \> \geq \> 
\frac{b_i(a_i \log a_i +\log b_i)}{\log(b_i(a_i \log a_i +\log b_i))} . $$
It follows that
\begin{align*}
\sum_i a_i b_i & \> \geq \> 
\frac{\sum_i b_i(a_i \log a_i +\log b_i)}{\max_i (\log(b_i(a_i \log a_i +\log b_i)))} \\ & \> \geq \> 
\frac{\sum_i b_i(a_i \log a_i +\log b_i)}{\log( \sum_i (b_i(a_i \log a_i +\log b_i)))} \\ & \> \geq \> 
\frac{\log|\widetilde{G}|}{\log \log|\widetilde{G}|} .
\end{align*}
To sum up, we proved that $\widetilde{G}$ and then $G$ contains an abelian section of size at least
\begin{equation} \label{eq2}
|\widetilde{G}|^{3/7\log\log|\widetilde{G}|} \> \geq \>  |G|^{3/19\log\log|G|} , 
\end{equation} 
and the proof is complete via Lemma \ref{lemmaNilp2},
once we put back the factor $1/5$ in (\ref{eq1}) and (\ref{eq2}).
\end{proof}

\begin{remark}
We would like to make a remark about the gap between
Pyber's $2^{\varepsilon \sqrt{\log|G|}}$ bound and the much larger $|G|^{1/32 \log\log|G|}$.
This difference is explained by the ``distance'' between nilpotent groups of class $2$ and abelian groups
(in contrast to the ``proximity'' between solvable groups and nilpotent groups of class $2$, for example).
By a non-explicit construction of Ol'shanskii \cite{1978O},
there exists an arbitrarily large finite group $G$, nilpotent of class $2$, without abelian subgroups larger than $2^{\alpha \sqrt{\log|G|}}$
for some fixed $\alpha>0$.
\end{remark}

\vspace{0.3cm}
\bibliographystyle{amsplain}
%    Insert the bibliography data here.

\thebibliography{10}

\bibitem{1967Dixon} J.D. Dixon, \textit{The Fitting subgroup of a linear solvable group},
Journal of the Australian Mathematical Society \textbf{7} (1967), 417-424.

\bibitem{1976ES} P. Erd\H{o}s, E.G. Straus, \textit{How abelian is a finite group?},
Linear and Multilinear Algebra \textbf{3} (1976), 307-312.

\bibitem{1933Hall} P. Hall, \textit{A contribution to the theory of groups of prime power order},
 Proceedings of the London Mathematical Society \textbf{36} (1933), 29-95.

\bibitem{1991Heineken} H. Heineken, \textit{Nilpotent subgroups of finite solvable groups},
Archiv der Mathematik \textbf{56} (1991), 417-423.

\bibitem{2018Kourovka} E.I. Khukhro, V.D. Mazurov, \textit{Unsolved Problems in Group Theory: The Kourovka Notebook}
\textbf{19}, Sobolev Institute of Mathematics (2018).

\bibitem{1978O} A.Y. Ol'shanskii, \textit{The number of generators and orders of abelian subgroups of finite $p$-groups},
Mathematical Notes \textbf{23} (1978), 183-185.

\bibitem{1997Pyber} L. Pyber, \textit{How abelian is a finite group?},
The Mathematics of Paul Erd\H{o}s,
Algorithms and Combinatorics \textbf{13} (1997 Springer, Berlin), 372-384.

\bibitem{1969Thompson} J. Thompson,
   \textit{A replacement theorem for $p$-groups and a conjecture},
	Journal of Algebra \textbf{13} (1969), 149-151.

  \vspace{1cm}

\end{document}